



\documentclass[10pt]{article}
\usepackage{latexsym,amsmath,color,amsthm,amssymb,epsfig,graphicx,mathrsfs}
\usepackage{graphicx}
\usepackage{amssymb}
\usepackage{fullpage}
\usepackage[linktocpage=true]{hyperref}
\usepackage{setspace}
\usepackage{amssymb, amsmath, amsthm, graphicx,mathrsfs}

\def\qed{\hfill\ifhmode\unskip\nobreak\fi\quad\ifmmode\Box\else\hfill$\Box$\fi}
\def\ite#1{\hfill\break${}$\hbox to 50pt {\quad(#1)\hfill}}
\newtheorem{thm}{Theorem}

\newtheorem{lem}[thm]{Lemma}

\newtheorem{claim}[thm]{Claim}

\parindent=0pt
\parskip=6pt

\begin{document}

\title{\vspace{-0.5in} A variation of a theorem by P\'osa}

\author{
{{Zolt\'an F\" uredi}}\thanks{
\footnotesize {Alfr\' ed R\' enyi Institute of Mathematics, Hungary
E-mail:  \texttt{furedi.zoltan@renyi.mta.hu}.
Research supported in part by the Hungarian National Research, Development and Innovation Office NKFIH grant K116769, and  by the Simons Foundation Collaboration Grant 317487.
}}
\and
{{Alexandr Kostochka}}\thanks{
\footnotesize {University of Illinois at Urbana--Champaign, Urbana, IL 61801
 and Sobolev Institute of Mathematics, Novosibirsk 630090, Russia. E-mail: \texttt {kostochk@math.uiuc.edu}.
 Research 
is supported in part by NSF grants  DMS-1266016 and DMS-1600592
and grants 18-01-00353A and 16-01-00499  of the Russian Foundation for Basic Research.
}}
\and{{Ruth Luo}}\thanks{University of Illinois at Urbana--Champaign, Urbana, IL 61801, USA. E-mail: {\tt ruthluo2@illinois.edu}.}}

\date{
         April 12, 2018}

\maketitle

\vspace{-0.3in}

\begin{abstract} A graph $G$ is $\ell$-hamiltonian if for any linear forest $F$ of $G$ with $\ell$ edges, $F$ can be extended to a hamiltonian cycle of $G$.
We give a sharp upper bound for the maximum number of cliques of a fixed size in a non-$\ell$-hamiltonian graph. Furthermore, we prove stability for the bound: if a non-$\ell$-hamiltonian graph contains almost the maximum number of cliques, then it must be a subgraph of one of two examples.

\medskip\noindent
{\bf{Mathematics Subject Classification:}} 05C35, 05C38.\\
{\bf{Keywords:}} Tur\' an problem, hamiltonian cycles, extremal graph theory.
\end{abstract}

\section{Background, $\ell$-hamiltonian graphs}
We use standard notation. In particular, $V(G)$ denotes the vertex set of a graph $G$, $E(G)$ denotes
the edge set of $G$, and $e(G)=|E(G)|$. Also, if $v\in V(G)$, then $N(v)$ denotes the neighborhood of $v$ and
$\deg(v)=|N(v)|$.
Call a graph {\em $\ell$-hamiltonian} if for every linear forest $F$ with $\ell$ edges contained in $G$, $G$
has a hamiltonian cycle containing all edges of $F$. In particular,   `$0$-hamiltonian' means `hamiltonian'.
A well-known sufficient condition for a graph to be
$\ell$-hamiltonian was proved by P\'osa~\cite{Po}:

\begin{thm}[P\'osa~\cite{Po}]\label{Posa2}
Let $n\geq 3$, $1\leq \ell <n$ and let $G$ be an $n$-vertex graph such that
$\deg(u)+\deg(v)\geq n+\ell\;$ for every non-edge $uv$ in $G$.
Then $G$ is $\ell$-hamiltonian.
\end{thm}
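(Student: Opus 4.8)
The plan is to adapt the classical closure/rotation proof of Ore's hamiltonicity theorem (the case $\ell=0$) and to use the extra ``$+\ell$'' in the degree hypothesis as a \emph{budget} that lets us avoid destroying the $\ell$ prescribed edges of $F$. Suppose the conclusion fails: some linear forest $F$ with $\ell$ edges does not extend to a hamiltonian cycle of $G$. First I would note that the hypothesis is preserved under adding edges, since inserting an edge never lowers a degree and only removes pairs from the set of non-edges; so $\deg(u)+\deg(v)\ge n+\ell$ keeps holding for every remaining non-edge. I would also observe that $K_n$ is $\ell$-hamiltonian whenever $1\le \ell<n$ and $n\ge 3$, because a linear forest with fewer than $n$ edges can always be threaded into a hamiltonian cycle of a complete graph. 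Hence I may add edges to $G$, keeping $F$ as a subgraph, until reaching an edge-maximal graph $G^{*}\supseteq F$ in which $F$ still does not extend to a hamiltonian cycle but adding any further edge would allow it.

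Since $G^{*}\ne K_n$, pick a non-edge $uv$; by maximality $G^{*}+uv$ has a hamiltonian cycle $C\supseteq F$. This cycle must use $uv$, for otherwise $C\subseteq G^{*}$, contradicting the choice of $G^{*}$. Deleting $uv$ therefore yields a hamiltonian path $P=v_1v_2\cdots v_n$ of $G^{*}$ with $v_1=u$ and $v_n=v$ that contains every edge of $F$. Now I would set $S=\{i:\ v_1v_{i+1}\in E(G^{*})\}$ and $T=\{i:\ v_iv_n\in E(G^{*})\}$, both subsets of $\{1,\dots,n-1\}$ with $|S|=\deg(v_1)$ and $|T|=\deg(v_n)$, and also $A=\{i:\ v_iv_{i+1}\in E(F)\}$, so that $|A|=\ell$. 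The point is that if some $i\in S\cap T$ has $v_iv_{i+1}\notin E(F)$, then the rotated cycle $v_1v_2\cdots v_i v_n v_{n-1}\cdots v_{i+1}v_1$ is hamiltonian, drops only the edge $v_iv_{i+1}$ from $P$, and hence still contains all of $F$ --- contradicting the definition of $G^{*}$.

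It remains to guarantee such an index exists. Because $uv$ is a non-edge we have $\deg(v_1)+\deg(v_n)\ge n+\ell$, so
\[
|S\cap T|\ \ge\ |S|+|T|-(n-1)\ \ge\ (n+\ell)-(n-1)\ =\ \ell+1,
\]
and therefore $|S\cap T\setminus A|\ge(\ell+1)-\ell=1$, producing the desired $i$. The only delicate point --- and the one I expect to be the crux --- is precisely this last step: an ordinary rotation could splice out one of the prescribed edges of $F$, which we are forbidden to lose. This is exactly why the hypothesis carries $+\ell$ rather than $+0$: it enlarges $|S\cap T|$ from the Ore value $1$ up to $\ell+1$, just enough to survive deleting the at most $\ell$ forbidden indices in $A$. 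I would only need to check the boundary indices carefully (that $v_1\notin N(v_n)$ and $v_n\notin N(v_1)$, so $S$ and $T$ indeed lie in $\{1,\dots,n-1\}$ and the intersection bound is valid); conceptually the argument is Ore's theorem run with an $\ell$-edge reserve.
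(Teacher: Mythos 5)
Your proof is correct and follows essentially the same route as the paper: the paper likewise reduces to a graph in which adding the non-edge $uv$ creates a hamiltonian cycle through $F$, extracts a hamiltonian path from $u$ to $v$ containing $F$, and runs the same rotation argument, observing that at most $\ell$ of the $\deg(u)$ candidate rotations are blocked by edges of $F$. Your inclusion--exclusion bound $|S\cap T|\geq \ell+1$ is just a reformulation of the paper's count $\deg(v)\leq (n-1)-(\deg(u)-\ell)$, so the two arguments are the same in substance.
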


A family of extremal non-$\ell$-hamiltonian graphs is as follows. For $n,d,\ell \in \mathbb N$ with $\ell < d \leq \left \lfloor \frac{n+\ell - 1}{2} \right \rfloor$,
let the graph $H_{n,d,\ell}$ be obtained from a copy of $K_{n-d+\ell}$, say with vertex set $A$,
 by adding $d-\ell$ vertices of degree $d$ each of which is adjacent to the same set $B$ of $d$ vertices in $A$ (see the left side of Figure~\ref{f02}).  Let
\begin{equation}\label{equ0}
   h(n,d,\ell):=|E(H_{n,d,\ell)}|={n-d+ \ell \choose 2} + (d-\ell)d.
\end{equation}
For any $\ell < d \leq \lfloor (n+\ell - 1)/2 \rfloor$, graph $H_{n,d,\ell}$  is not $\ell$-hamiltonian: No linear forest $F$ with $\ell$ edges in $G[B]$ can be completed to a hamiltonian cycle of $H_{n,d,\ell}$.
Erd\H{o}s ~\cite{Erdos}  proved the following Tur\' an-type result:
\begin{thm}[Erd\H{o}s~\cite{Erdos}]\label{Erdos} Let $n$  and $d$ be integers with $0< d \leq \left \lfloor \frac{n-1}{2} \right \rfloor$.
If $G$ is a nonhamiltonian graph on $n$ vertices with minimum degree $\delta(G) \geq d$, then
     \[e(G) \leq \max\left\{
   h(n,d,0),h(n, \left \lfloor \frac{n-1}{2} \right\rfloor,0)\right\}.\]
\end{thm}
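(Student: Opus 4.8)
The plan is to reduce the statement to a degree-sequence obstruction to Hamiltonicity and then optimize a single convex expression. Write the degree sequence of $G$ in nondecreasing order as $d_1 \le d_2 \le \cdots \le d_n$. The engine is the classical degree-sequence criterion (Chv\'atal's condition): if for every $i$ with $1 \le i < n/2$ one has $d_i > i$ or $d_{n-i} \ge n-i$, then $G$ is hamiltonian. Since $G$ is nonhamiltonian, this criterion must fail for some index, so there exists $i$ with $1 \le i < n/2$ such that $d_i \le i$ and $d_{n-i} \le n-i-1$. Because $\delta(G) \ge d$ we have $d_i \ge d$, and together with $d_i \le i$ this forces $i \ge d$; meanwhile $i < n/2$ gives $i \le \lfloor (n-1)/2\rfloor$. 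Thus the bad index lies in the range $d \le i \le \lfloor (n-1)/2 \rfloor$, which is nonempty precisely by the hypothesis $d \le \lfloor (n-1)/2\rfloor$.

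Next I would bound $e(G) = \tfrac12 \sum_j d_j$ by splitting the sorted sequence into three blocks determined by this $i$. The first $i$ terms are each at most $d_i \le i$; the middle $n-2i$ terms are each at most $d_{n-i} \le n-i-1$; and the last $i$ terms are each at most $n-1$. Summing,
\[
2e(G) \;\le\; i^2 + (n-2i)(n-i-1) + i(n-1).
\]
A direct expansion shows the right-hand side equals $(n-i)(n-i-1) + 2i^2 = 2h(n,i,0)$, so in fact $e(G) \le h(n,i,0) = \binom{n-i}{2} + i^2$. This is no accident: in $H_{n,i,0}$ the degree sequence consists of $i$ vertices of degree $i$, then $n-2i$ vertices of degree $n-i-1$, then $i$ vertices of degree $n-1$, so the three-block bound is attained with equality, which also certifies that the estimate cannot be improved.

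Finally I would optimize over $i$. As a function of the integer variable $i$, the quantity $h(n,i,0) = \binom{n-i}{2} + i^2$ is a quadratic with positive leading coefficient, hence convex, and on any interval it attains its maximum at an endpoint. Restricting to $d \le i \le \lfloor (n-1)/2\rfloor$ therefore gives
\[
e(G) \le h(n,i,0) \le \max\left\{ h(n,d,0),\, h\big(n,\lfloor (n-1)/2\rfloor,0\big)\right\},
\]
which is exactly the claimed bound.

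The one genuinely substantial step is the existence of the bad index, i.e. the degree-sequence criterion itself; everything afterward is bookkeeping plus a convexity argument, so I expect this to be the main obstacle. I would either cite the classical criterion or establish the direction we need directly by a P\'osa-type argument: pass to an edge-maximal nonhamiltonian graph, take a longest path, and use rotations to generate path-endpoints whose neighborhoods would otherwise force a hamiltonian cycle, contradicting nonhamiltonicity and yielding the required index. A secondary point to verify is simply that the interval of admissible $i$ is nonempty and that the three-block split exhausts all $n$ degrees, both of which are immediate from $d \le \lfloor (n-1)/2\rfloor$.
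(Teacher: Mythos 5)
Your proof is correct, and it follows the same overall strategy as the paper's, but with a genuinely different engine. Note first that the paper never proves Theorem~\ref{Erdos} in isolation: it is recovered as the case $\ell=0$, $r=2$ of Theorem~\ref{PP}, whose proof runs as follows. The P\'osa--Kronk degree-sequence condition (Theorem~\ref{Posa2k}) must fail for a nonhamiltonian graph, producing a set of $k$ vertices of degree at most $k$ for some $k$ with $d\le k\le\lfloor (n-1)/2\rfloor$ (Lemma~\ref{lem8}); then Claim~\ref{kcliques} bounds the number of $r$-cliques by $h_r(n,k,0)$ by splitting cliques into those avoiding the low-degree set (at most $\binom{n-k}{r}$) and those meeting it (at most $k\binom{k}{r-1}$); finally convexity (Claim~\ref{claim10}) pushes the maximum to the endpoints $k=d$ and $k=\lfloor(n-1)/2\rfloor$. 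You replace the P\'osa--Kronk condition with Chv\'atal's condition, which yields not only the low-degree block ($d_i\le i$ for an index $i$ in the same range) but also the extra inequality $d_{n-i}\le n-i-1$, and you then bound $2e(G)$ by summing degrees over three blocks rather than counting edges in two classes. Your algebra is right: both the three-block sum and $2h(n,i,0)$ expand to $n^2-2ni-n+3i^2+i$, and the convexity step is the same as the paper's. The trade-off is this: Chv\'atal's stronger criterion makes all three block estimates simultaneously tight on $H_{n,i,0}$, giving a clean, self-certifying edge bound; but the degree-sum computation is specific to $r=2$, whereas the paper's clique-splitting Claim~\ref{kcliques} is exactly what lets the same skeleton count $K_r$'s for every $r$ and handle general $\ell$, which is the point of the paper. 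One small point to make explicit if you write this up: Chv\'atal's theorem requires $n\ge 3$, which holds here since $1\le d\le\lfloor(n-1)/2\rfloor$ forces $n\ge 3$.
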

Graphs  $H_{n,d,0}$ and $H_{n, \left \lfloor \frac{n-1}{2} \right\rfloor,0}$ show the sharpness of Theorem~\ref{Erdos}.

\begin{figure}[!ht]\label{f02}
  \centering

    \includegraphics[width=0.3\textwidth]{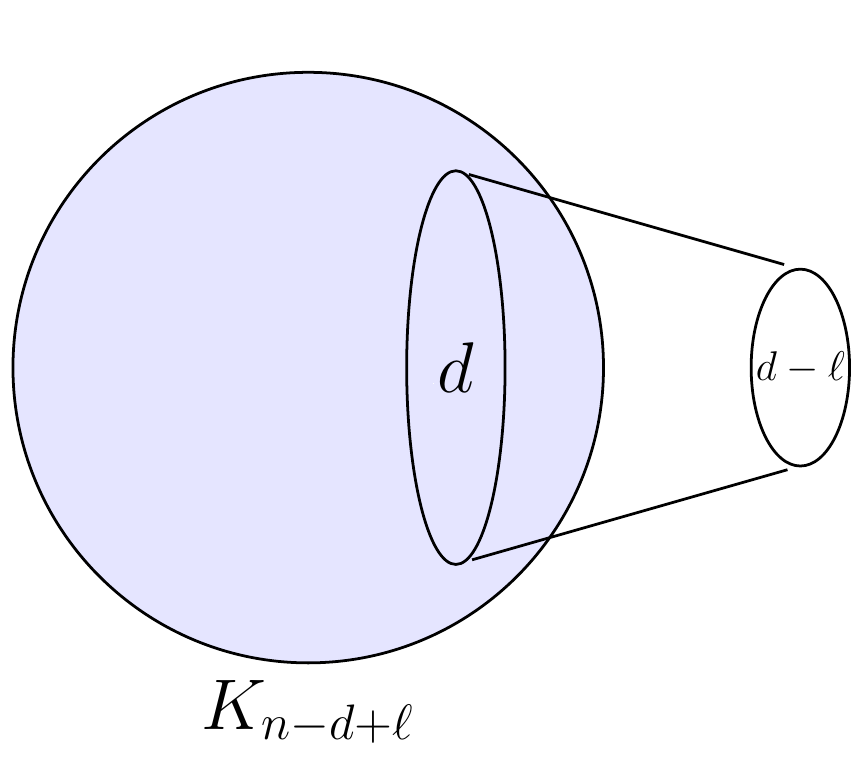} \;\;\;\;\;\;
    \includegraphics[width=0.3\textwidth]{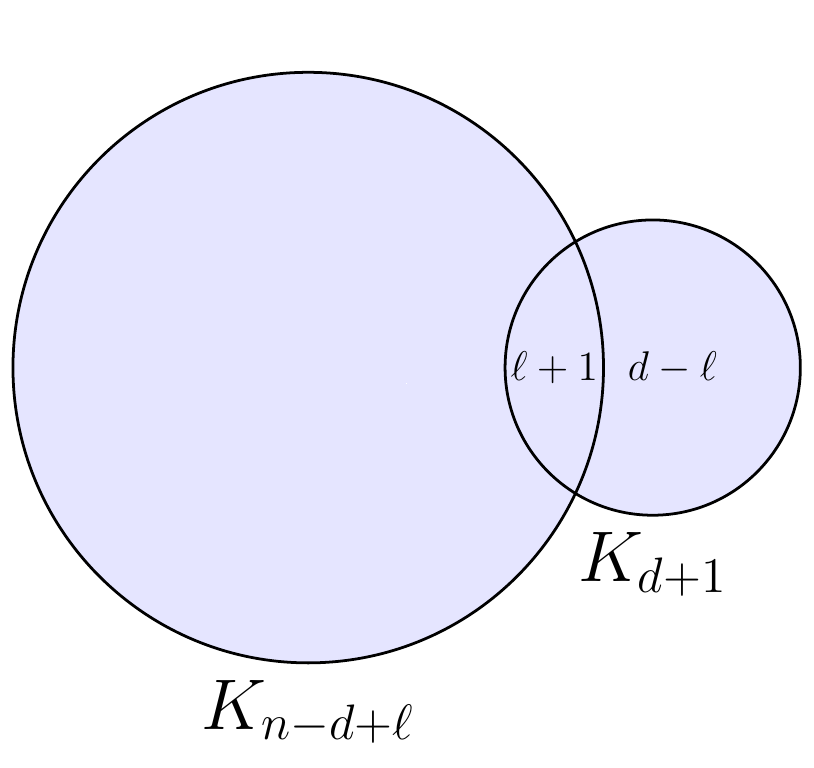}
  \caption{ Graphs $H_{n,d,\ell}$ and $H'_{n,d,\ell}$; shaded ovals denote complete graphs.}
\end{figure}

Another non-$\ell$-hamiltonian graph with  many edges
is  $H'_{n,d,\ell}$ obtained from a copy
 of $K_{n-d+\ell}$ and a copy  of $K_{d+1}$ by identifying $\ell + 1$ vertices  (see Fig.~2, on the right).
Similarly to $H_{n,d,\ell}$, no path of $\ell$ edges spanning the  $\ell + 1$ dominating vertices in $H'_{n,d,\ell}$ can be extended to a hamiltonian cycle.

Li and Ning~\cite{lining} and independently the present authors~\cite{oldmain} proved the following refinement of Theorem~\ref{Erdos}.

\begin{thm}\label{ma}
Let $n\geq 3$ and $d\leq \left \lfloor \frac{n-1}{2} \right \rfloor$.
Suppose that $G$ is an $n$-vertex  nonhamiltonian graph  with minimum degree $\delta(G) \geq d$ such that
\begin{equation}\label{equ2}
    e(G) > \max\left\{ h(n,d+1,0), h(n, \left\lfloor \frac{n-1}{2}\right\rfloor,0)\right\}.
\end{equation}
Then $G$ is a subgraph of either $H_{n,d,0}$ or $H'_{n,d,0}$.
\end{thm}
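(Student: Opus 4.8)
The plan is to pass from the degree sequence of $G$ to a single ``obstruction level'' $t$, show that the hypothesis forces $t=d$, and then read off the structure by a rotation--extension (insertion) argument.

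First I would invoke a classical degree-sequence obstruction to hamiltonicity: since $G$ is nonhamiltonian and $n\ge 3$, Chv\'atal's theorem (obtainable via P\'osa's rotation technique) provides an integer $t$ with $1\le t<n/2$ such that $G$ has at least $t$ vertices of degree at most $t$ and at least $t+1$ vertices of degree at most $n-1-t$. As $\delta(G)\ge d$, the first statement gives $t\ge d$, and since $t<n/2$ we have $t\le \lfloor (n-1)/2\rfloor$. Counting the non-edges incident to any $t$ vertices of degree at most $t$ (each misses at least $n-1-t$ edges, and at most $\binom{t}{2}$ of these are double counted) yields $e(G)\le \binom{n}{2}-\bigl(t(n-1-t)-\binom{t}{2}\bigr)=h(n,t,0)$; here I am using the elementary identity $h(n,t,0)=\binom{n-t}{2}+t^2$.

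Next I would force $t=d$. The quantity $h(n,t,0)$ is a convex function of $t$, so on the integer range $d+1\le t\le\lfloor (n-1)/2\rfloor$ it is bounded by its values at the two ends, namely $h(n,t,0)\le\max\{h(n,d+1,0),\,h(n,\lfloor (n-1)/2\rfloor,0)\}$. If $t\ge d+1$ this contradicts the hypothesis \eqref{equ2}, so $t=d$; in particular every one of the (at least $d$) low vertices has degree exactly $d$, and $e(G)\le h(n,d,0)$. Writing $A$ for the set of degree-$d$ vertices, fixing any $B\subseteq A$ with $|B|=d$ and letting $K=V\setminus B$, the same count sharpens to the exact identity $e(G)=h(n,d,0)-e(B)-\overline{e}(K)$, where $e(B)$ is the number of edges inside $B$ and $\overline{e}(K)$ the number of non-edges inside $K$ (this follows from $\sum_{v\in B}\deg v=d^2$). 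Hence $e(B)+\overline{e}(K)=h(n,d,0)-e(G)<h(n,d,0)-h(n,d+1,0)=n-3d-2$, a positive and small quantity precisely in the nonvacuous regime $d<(n-2)/3$. Thus $K$ is a clique on $n-d$ vertices missing only a few edges, while $B$ spans almost no edges.

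Finally I would identify $G$ as a subgraph of $H_{n,d,0}$ or of $H'_{n,d,0}$; this is the crux. Since $K$ is an almost-complete graph on $n-d$ vertices, it is robustly Hamiltonian-connected, so one can build a spanning cycle of $K$ routed through almost any prescribed set of adjacent pairs, and then try to insert the $d$ vertices of $B$ one per gap, each between two of its own neighbors. The point is that nonhamiltonicity of $G$ must obstruct this: by a defect form of Hall's theorem, if the vertices of $B$ neither all share a common $d$-element neighborhood (the pattern of $H_{n,d,0}$) nor form a clique meeting the rest of $G$ in a single cut vertex (the pattern of $H'_{n,d,0}$), then all $d$ insertions can be performed simultaneously, giving a Hamiltonian cycle of $G$ --- a contradiction. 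Each surviving pattern displays $G$ as a subgraph of the corresponding extremal graph. The main obstacle is to make this last step rigorous while honestly accounting for the $<n-3d-2$ missing edges of $K$ and for the vertices outside $B$ whose degree may exceed $d$ but is at most $n-1-d$; these must be shown not to provide enough extra adjacencies to complete a Hamiltonian cycle.
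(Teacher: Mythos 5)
The first half of your argument is correct and runs parallel to the paper's own machinery: a degree-sequence obstruction (the paper uses P\'osa's condition, Theorem~\ref{Posa2k}, via Lemma~\ref{lem8}; Chv\'atal's theorem works equally well) produces $t\geq d$ vertices of degree at most $t$, the non-edge count gives $e(G)\leq h(n,t,0)$, and convexity (Claim~\ref{claim10}) forces $t=d$; your sharpened identity $e(B)+\overline{e}(K)=h(n,d,0)-e(G)<n-3d-2$ is also right. The gap is the final step, and it is not a deferred technicality but the entire content of the theorem. The ``defect form of Hall's theorem'' dichotomy you invoke is precisely the statement to be proved, and no version of Hall's theorem delivers it: the insertion problem is not a bipartite matching problem against a fixed host, because the hamiltonian cycle of $K$ must be chosen simultaneously with the insertions; $B$ need not be independent, so adjacent vertices of $B$ must be inserted as path segments (requiring a single gap whose endpoints see the two ends of the segment), which changes the combinatorics; and the up to $n-3d-3$ missing edges of $K$ together with the edges inside $B$ create intermediate configurations --- part of $B$ clique-like near a small cut, part sharing neighborhoods, defects of $K$ placed adversarially --- that match neither of your two patterns and must each be shown to be hamiltonian. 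Asserting that nonhamiltonicity ``must obstruct'' the insertions in exactly one of the two extremal ways is assuming the conclusion.

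The idea you are missing, and what makes the paper's proof short, is to avoid analyzing $G$ directly: first add edges to $G$ until it is saturated (edge-maximal nonhamiltonian), prove that the saturated graph $G'$ is \emph{exactly} $H_{n,d,0}$ or $H'_{n,d,0}$, and then conclude that $G\subseteq G'$ is a subgraph of one of them. Saturation converts approximate structure into exact structure: by Bondy--Chv\'atal (Theorem~\ref{D1}), every non-edge of $G'$ has degree sum at most $n-1$, from which the paper deduces that $G'-D$ is a complete graph (Lemma~\ref{g-d}) and that all vertices of $D$ have the same neighborhood $W$ outside $D$ (Lemma~\ref{hnd}). The proof then closes with a clean trichotomy on $|W|$: $|W|=d$ yields $H_{n,d,0}$, $|W|=1$ yields $H'_{n,d,0}$, and $2\leq |W|\leq d-1$ is impossible because P\'osa's theorem (Theorem~\ref{Posa2}) applied to $G'[D\cup W]$, spliced with a path through the outside clique, produces a hamiltonian cycle. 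Without this saturation step, or an equally strong substitute for it, your insertion argument faces the approximate structure of $G$ head-on, and as written it does not go through.
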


Recently, Ma and Ning~\cite{maning} extended Theorem~\ref{ma} to graphs with bounded circumference.
Also, Luo~\cite{luo} and Ning and Peng~\cite{ningpeng} bounded the number cliques of given size in graphs with
bounded circumference. The goal of this note is to refine and extend Theorem~\ref{ma} in a different direction---for
non-$\ell$-hamiltonian graphs. One can also view them as an extension of Theorem~\ref{Posa2}.
We state our results in the next section and prove them in the remaining two sections.

\section{Our results}

For a graph $G$, let $N(G, K_r)$ denote the number of copies of $K_r$ in $G$.
In particular, $N(G, K_2) = e(G)$.
Let
$$h_r(n,d,\ell) := N(H_{n, d, \ell}, K_r) = {n - d + \ell \choose r} + (d-\ell){d \choose r-1}.
   $$

We show that classical results easily imply the following extension of Theorem~\ref{Erdos} for non-$\ell$-hamiltonian graphs.
\begin{thm}\label{PP} Let $n, d,r,\ell$ be integers with $r\geq 2$ and $0 \leq \ell < d \leq \left \lfloor \frac{n+\ell-1}{2} \right \rfloor$.
If $G$ is an $n$-vertex graph with minimum degree $\delta(G) \geq d$, and $G$ is not $\ell$-hamiltonian, then
     \[N(G, K_r) \leq \max\left\{ h_r(n,d,\ell),h_r(n, \left \lfloor \frac{n+\ell-1}{2} \right \rfloor,\ell)\right\}.\]
     In particular,
     $$e(G)\leq \max\left\{
   h(n,d,\ell),h(n, \left \lfloor \frac{n+\ell-1}{2} \right\rfloor,\ell)\right\}.$$
The graphs $H_{n,d,\ell}$ and $H_{n, \left \lfloor (n+ \ell - 1)/2 \right \rfloor, \ell}$
  show that this bound is sharp for all
$0\leq \ell < d\leq \left \lfloor \frac{n+ \ell - 1}{2} \right \rfloor$.
\end{thm}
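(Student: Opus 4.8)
The plan is to mirror Erd\H{o}s's proof of Theorem~\ref{Erdos}, but to replace the hamiltonicity degree condition by its $\ell$-hamiltonian analogue and to replace the edge count by a direct count of copies of $K_r$. Write $D=\lfloor (n+\ell-1)/2\rfloor$, and regard $d'\mapsto h_r(n,d',\ell)$ as a function of a real variable. The whole argument rests on two classical ingredients: the degree-sequence (Chv\'atal--P\'osa) form of P\'osa's theorem from \cite{Po}, of which Theorem~\ref{Posa2} is only the Ore-type corollary, together with elementary convexity. Notably, I would \emph{not} invoke Kruskal--Katona, because a bound phrased purely in terms of $e(G)$ overcounts: concentrating edges into a single clique produces more copies of $K_r$ than the ``clique plus pendant vertices'' structure of $H_{n,d,\ell}$, so the Kruskal--Katona extremizer does not match $h_r(n,d,\ell)$ for $r\ge 3$.

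First I would extract structure from the failure of $\ell$-hamiltonicity. Since $G$ is not $\ell$-hamiltonian, its nondecreasing degree sequence $d_1\le\cdots\le d_n$ must violate the degree-sequence criterion for $\ell$-hamiltonicity. Exactly as in the classical derivation of Theorem~\ref{Erdos} from Chv\'atal's theorem, such a violation occurs at some index $i$ with $\ell<i<(n+\ell)/2$ satisfying $d_{i-\ell}\le i-1$ (and a complementary inequality). Setting $t:=i$, this yields a set $W\subseteq V(G)$ of exactly $t-\ell$ vertices, each of degree at most $t$ in $G$, with $d\le t\le D$: here $t\ge d$ because $\delta(G)\ge d$ forces every vertex of $W$ to have degree at least $d$ (and $t-\ell\ge 1$ since $t\ge d>\ell$), while $t\le D$ because the violated index lies below $(n+\ell)/2$. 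Put $C=V(G)\setminus W$, so that $|C|=n-t+\ell$. This $W$ is the precise $\ell$-analogue of the low-degree set isolated in Erd\H{o}s's argument, and producing it cleanly in the stated range $d\le t\le D$ is the main obstacle: one must check that the $\ell$-shifted degree condition fails at an index translating into both the cardinality $|W|=t-\ell$ and the degree bound $t$, and that the minimum-degree hypothesis pins the lower endpoint.

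Given $W$ and $C$, I would bound $N(G,K_r)$ by charging. Every copy of $K_r$ either lies inside $C$, and there are at most $\binom{|C|}{r}=\binom{n-t+\ell}{r}$ of these, or it meets $W$, in which case I charge it to one of its vertices $v\in W$. A copy of $K_r$ through $v$ is determined by its other $r-1$ vertices, all lying in $N(v)$, so at most $\binom{\deg(v)}{r-1}\le\binom{t}{r-1}$ copies are charged to each $v\in W$. Summing over the $t-\ell$ vertices of $W$ gives
\[
N(G,K_r)\le\binom{n-t+\ell}{r}+(t-\ell)\binom{t}{r-1}=h_r(n,t,\ell).
\]
Finally, since $\binom{n-d'+\ell}{r}$ is convex and decreasing in $d'$ while $(d'-\ell)\binom{d'}{r-1}$ is convex and increasing in $d'$, the map $d'\mapsto h_r(n,d',\ell)$ is convex on $[d,D]$ and hence attains its maximum at an endpoint; therefore $h_r(n,t,\ell)\le\max\{h_r(n,d,\ell),h_r(n,D,\ell)\}$, which is the asserted inequality. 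The edge bound is the case $r=2$, and the graphs $H_{n,d,\ell}$ and $H_{n,D,\ell}$ exhibit sharpness. The only remaining routine point is to confirm this convexity at the integer endpoints for every $r\ge 2$.
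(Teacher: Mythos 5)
Your proposal is correct and follows essentially the same route as the paper: it invokes the P\'osa--Kronk degree-sequence criterion for $\ell$-hamiltonicity (Theorem~\ref{Posa2k}) to extract a set of $t-\ell$ vertices of degree at most $t$ with $d\le t\le\lfloor\frac{n+\ell-1}{2}\rfloor$, bounds $N(G,K_r)$ by the same charging argument as Claim~\ref{kcliques}, and finishes with the convexity of $h_r(n,\cdot,\ell)$ exactly as in Claim~\ref{claim10}. The only cosmetic difference is that you fold the two failure cases of the criterion into a single range $d\le t\le D$, whereas the paper disposes of the case $t=D$ up front via Lemma~\ref{lem8}.
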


Note that a partial case of Theorem~\ref{PP} (when $\ell=1$ and $r=2$) was proved by Ma and Ning~\cite{maning}.

We also obtain a generalization of Theorem~\ref{ma} which can be viewed as a stability version of Theorem~\ref{PP}.

\begin{thm}\label{ma2}
Let $n\geq 3$, $r\geq 2$  and $\ell < d\leq \left \lfloor \frac{n+\ell-1}{2} \right \rfloor$.
Suppose that $G$ is an $n$-vertex not $\ell$-hamiltonian graph  with minimum degree $\delta(G) \geq d$ such that
\begin{equation}
    N(G, K_r) > \max\left\{ h_r(n,d+1,\ell), h_r(n, \left\lfloor \frac{n+\ell-1}{2}\right\rfloor,\ell)\right\}.
\end{equation}
Then $G$ is a subgraph of either $H_{n,d,\ell}$ or $H'_{n,d,\ell}$.
\end{thm}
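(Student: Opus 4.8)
My plan is to mirror the proof of Theorem~\ref{ma} (the case $\ell=0$), inserting two modifications: a closure operation that lets me assume $G$ is edge-maximal, and a rotation argument in which the $\ell$ edges of the forbidden forest stay frozen, so that every threshold and every ``dominated'' count shifts by exactly $\ell$. First I would set up a Bondy--Chv\'atal-type closure for $\ell$-hamiltonicity: if $uv\notin E(G)$ and $\deg(u)+\deg(v)\ge n+\ell$, then $G$ is $\ell$-hamiltonian if and only if $G+uv$ is. The nontrivial direction takes a hamiltonian cycle $C$ of $G+uv$ extending a given linear forest $F$ with $\ell$ edges and reroutes it off $uv$; the surplus $\ell$ in the degree sum is precisely what guarantees a rerouting crossing pair that avoids the $\ell$ forbidden positions of $F$, so the new cycle still contains $F$. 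Let $\widehat G$ be the resulting $(n+\ell)$-closure. Closure only adds edges, so $\delta(\widehat G)\ge d$ and $N(\widehat G,K_r)\ge N(G,K_r)$, and $\widehat G$ is still non-$\ell$-hamiltonian; moreover one checks that $H_{n,d,\ell}$ and $H'_{n,d,\ell}$ are their own $(n+\ell)$-closures (every non-edge has degree sum at most $n+\ell-1$). Hence it suffices to prove $\widehat G\subseteq H_{n,d,\ell}$ or $\widehat G\subseteq H'_{n,d,\ell}$, and then $G\subseteq\widehat G$ finishes. So I may assume $G$ is $(n+\ell)$-closed.

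Next I would fix a linear forest $F$ with $\ell$ edges witnessing non-$\ell$-hamiltonicity and run the P\'osa rotation--extension method \emph{along edges outside $F$}, so that the $F$-edges remain fixed throughout. Starting from a non-spanning cycle or path that contains $F$ and a vertex $v$ of degree $\ge d$ that it misses, the rotations produce a set of path-endpoints that is independent and whose degrees are pinned down by $(n+\ell)$-closedness. Because $F$ ties up $\ell$ edges, the usual $\ell=0$ bookkeeping shifts by $\ell$, and I expect to obtain one of exactly two configurations: (i) a set of $d-\ell$ pairwise non-adjacent vertices, each adjacent to a common $d$-set $B$, with the remaining $n-d+\ell$ vertices forming a clique---the shape of $H_{n,d,\ell}$; or (ii) an $(\ell+1)$-vertex set whose removal leaves two cliques---the shape of $H'_{n,d,\ell}$.

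Finally I would use the $K_r$-count hypothesis to upgrade ``same shape'' to the exact containment. For a type-(i) graph whose dominating set has size $s$, the number of copies of $K_r$ is $\binom{n-s+\ell}{r}+(s-\ell)\binom{s}{r-1}=h_r(n,s,\ell)$. Since $h_r(n,s,\ell)$ is convex in $s$, its maximum over $s\in\{d+1,\dots,\lfloor(n+\ell-1)/2\rfloor\}$ is attained at an endpoint and equals $\max\{h_r(n,d+1,\ell),h_r(n,\lfloor(n+\ell-1)/2\rfloor,\ell)\}$; thus a dominating set of size $s\ge d+1$ would force $N(G,K_r)$ below the hypothesized bound, so $s=d$ and $G\subseteq H_{n,d,\ell}$. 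The same convexity, together with the balanced threshold $h_r(n,\lfloor(n+\ell-1)/2\rfloor,\ell)$, rules out the intermediate regime in case (ii) and pins that case as a subgraph of $H'_{n,d,\ell}$.

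The hard part will be the interface between the last two paragraphs: establishing that the P\'osa-rotation analysis, when the $F$-edges are frozen, genuinely forces the clean dichotomy (i)/(ii) with the shift being \emph{exactly} $\ell$ and with no surviving ``hybrid'' configuration, and then controlling the \emph{$K_r$}-count (not merely the edge count, as in Theorem~\ref{ma}) through that structure. I also note that one cannot simply delete $\ell$ vertices of $F$ and invoke Theorem~\ref{ma} as a black box: reinserting a deleted vertex into a hamiltonian cycle requires two adjacencies into the cycle, and that second adjacency need not exist, so the reduction must be carried out at the level of the rotation argument rather than by vertex deletion. Verifying the convexity of $h_r(n,s,\ell)$ on the relevant range, comparing the two extremal families, and handling the boundary value $d=\lfloor(n+\ell-1)/2\rfloor$ are the remaining computations.
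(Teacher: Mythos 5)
Your opening closure step and your convexity step are both sound and essentially coincide with the paper's moves: the paper passes to an $\ell$-saturated supergraph $G'$ and uses Theorem~\ref{D1} to conclude that every non-edge of $G'$ has degree sum at most $n+\ell-1$, which is exactly the property your $(n+\ell)$-closure enjoys, and your endpoint-maximization of $h_r(n,s,\ell)$ is Claim~\ref{claim10}. (One small overstatement: you assert $G$ is $\ell$-hamiltonian \emph{iff} $G+uv$ is, but only the direction ``$G+uv$ $\ell$-hamiltonian and $\deg(u)+\deg(v)\geq n+\ell$ implies $G$ $\ell$-hamiltonian'' is proved or needed; monotonicity under edge addition is not obvious when the linear forest uses the new edge $uv$.)

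The genuine gap is your second paragraph, which is the entire content of the stability statement: you assert that frozen-edge P\'osa rotations ``genuinely force the clean dichotomy (i)/(ii),'' provide no argument, and then yourself flag this as the unresolved hard part. The paper does not obtain the dichotomy from rotations at all; it needs three separate steps. First, the P\'osa--Kronk degree-sequence criterion (Theorem~\ref{Posa2k}) combined with the clique-counting Claim~\ref{kcliques} yields (Lemma~\ref{lem8}) a set $D$ of $k-\ell$ vertices of degree at most $k$ for some $\ell<k<\lfloor\frac{n+\ell-1}{2}\rfloor$, together with the bound $N(G,K_r)\leq h_r(n,k,\ell)$; convexity then kills every $k\geq d+1$, so one may assume $k=d=\delta$. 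Second, the closure property forces $G-D$ to be a clique (Lemma~\ref{g-d}): if $x,y\notin D$ were nonadjacent, then $D'=V(G)-N(x)-\{x\}$ would be a low-degree set large enough to contradict (\ref{lem8}.2). Third, and crucially (Lemma~\ref{hnd}), letting $W$ be the set of vertices outside $D$ with a neighbor in $D$, one shows via Theorem~\ref{D1} that every vertex of $W$ is adjacent to \emph{all} of $D$; then $|W|=d$ gives $H_{n,d,\ell}$, $|W|=\ell+1$ gives $H'_{n,d,\ell}$, and the hybrid range $\ell+2\leq |W|\leq d-1$ is eliminated by explicitly building a hamiltonian cycle through the forbidden forest $F$: choose an edge $ab$ in the clique $W$ compatible with $F$, apply P\'osa's Theorem~\ref{Posa2} to $G[D\cup W]$ (whose degree sums are at least $2d\geq |D\cup W|+\ell+1$) to get an $(a,b)$-hamiltonian path through $F\cap G[D\cup W]$, and splice it with a hamiltonian path through the rest of $F$ in the complete graph on the remaining vertices. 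This splicing argument is precisely what rules out the ``surviving hybrid configuration'' you worry about, and without it (or an equivalent) your proposal does not prove the theorem.
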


\section{Bound on the number of $r$-cliques: Proof of Theorem~\ref{PP} }

Beside the Ore-type condition of  Theorem~\ref{Posa2}, P\'osa~\cite{Po} and independently Kronk~\cite{Kronk} proved the following degree sequence version.

\begin{thm}[P\'osa~\cite{Po}, Kronk~\cite{Kronk}]\label{Posa2k}
Let $G$ be a graph on $n$ vertices and let   $0\leq \ell \leq n-2$.
The following two conditions (together) are sufficient for $G$ to be $\ell$-hamiltonian:\\ {\rm (\ref{Posa2k}.1)} for all integers $k$ with
 $\ell < k < \frac{n+\ell-1}{2} $, the number of points of degree not exceeding $k$  is less than $k-\ell$, \\
{\rm (\ref{Posa2k}.2)} the number of points of degree not exceeding $\frac{n+\ell-1}{2}$  does not exceed $\frac{n-\ell-1}{2}$.
\end{thm}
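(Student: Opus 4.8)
The plan is to prove Theorem~\ref{Posa2k} directly by Pósa's rotation--extension method, adapted so that the prescribed linear forest is never destroyed. Arguing by contradiction, I would fix a single linear forest $F\subseteq G$ with $\ell$ edges that does \emph{not} extend to a hamiltonian cycle, colour the edges of $F$ \emph{forced} and all other edges \emph{free}, and call a vertex \emph{saturated} if it is internal to a path of $F$ (so both its cycle-edges are already forced) and \emph{unsaturated} otherwise. First I would pass to an edge-maximal supergraph $G^{+}\supseteq G$ on the same vertex set, obtained by adding only free edges, in which $F$ still fails to extend; since adding edges only decreases the number of vertices of degree at most any given value, $G^{+}$ still satisfies (\ref{Posa2k}.1) and (\ref{Posa2k}.2), so it suffices to derive a contradiction for $G^{+}$. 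Maximality gives, for every free non-edge $uv$, a hamiltonian cycle of $G^{+}+uv$ through all of $F$ and through $uv$; deleting $uv$ produces a hamiltonian \emph{$F$-path}, i.e.\ a spanning path $P=v_{1}\dots v_{n}$ containing every forced edge, with $v_{1}v_{n}\notin E$. The problem thus becomes the failure to close such an $F$-path into a cycle through all forced edges.

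The engine is a forced-edge version of Pósa's rotation lemma. Pinning the end $v_{n}$, I would perform only \emph{admissible} rotations, namely those whose pivot edge and whose broken edge are both free; this automatically forbids rotating at a saturated vertex and preserves every forced edge. Let $A$ be the set of all vertices that occur as the free endpoint of a hamiltonian $F$-path to $v_{n}$ reachable by admissible rotations, and set $a=|A|$. The key structural claim, proved as in the classical case but tracking the forced edges, is twofold: $v_{n}$ has no free neighbour in $A$ (otherwise the cycle closes), and every neighbour of a vertex of $A$ is, along the appropriate $F$-path, the path-successor of a reachable endpoint. Consequently each vertex of $A$ has degree at most $a$, while the $\ell$ forced edges obstruct exactly $\ell$ of the crossing/closing positions that are available in the free case; this is the mechanism that will shift every threshold by $\ell$.

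With the lemma in hand I would run the standard counting. If $\ell<a<\tfrac{n+\ell-1}{2}$, the analysis yields at least $a-\ell$ vertices of degree at most $a$, which applied with $k=a$ contradicts (\ref{Posa2k}.1); and if instead $a\ge\tfrac{n+\ell-1}{2}$, the same count produces more than $\tfrac{n-\ell-1}{2}$ vertices of degree at most $\tfrac{n+\ell-1}{2}$, contradicting (\ref{Posa2k}.2). The two hypotheses thus correspond precisely to the generic cutoffs and the single balanced cutoff at $k=\tfrac{n+\ell-1}{2}$, exactly as in the Ore-type statement Theorem~\ref{Posa2}, of which this is the degree-sequence refinement. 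As a sanity check, the extremal graph $H_{n,d,\ell}$ has exactly $d-\ell$ vertices of degree at most $d$, matching the boundary value $k-\ell$ at $k=d$, which confirms that the $-\ell$ appearing in (\ref{Posa2k}.1) is the correct shift.

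The hard part will be the bookkeeping inside the forced-edge rotation lemma. When a rotation would break a forced edge it is blocked, so both the reachable set $A$ and its neighbourhood shrink relative to the unconstrained situation, and I must show that each such blockage is compensated by exactly one unit of slack in the degree count---this is the genuine origin of the $a-\ell$ (equivalently the $k-\ell$ and the $\tfrac{n-\ell-1}{2}$) in the two conditions. Making this precise requires a careful accounting of saturated vertices, which can never serve as rotation pivots or as endpoints, and of the endpoints of the paths of $F$, which carry one forced edge each. I also expect to dispose separately of the degenerate regimes: when $a\le\ell$ the reachable set is too small to feed the counting, but then there are so few relevant forced edges near the ends that one should be able to close the $F$-path directly, and the near-complete case must be checked so that the supergraph $G^{+}$ indeed admits a free non-edge and hence a hamiltonian $F$-path.
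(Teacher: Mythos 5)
Your overall strategy---a forced-edge version of P\'osa's rotation--extension method run on an edge-maximal counterexample---is viable in principle (it is close in spirit to how P\'osa and Kronk originally argued), but as written the proposal does not prove the theorem: its central lemma is both misstated and explicitly deferred. With blocked rotations, the injection from the neighbours of an endpoint $w\in A$ into $A$ (sending a pivot neighbour $u_i$ to its path-predecessor $u_{i-1}$) fails exactly at those neighbours whose predecessor edge is forced, and there can be up to $\ell$ of these; so the bound you can actually extract is $\deg(w)\le a+\ell$, not $\deg(w)\le a$. Correspondingly the contradiction with (\ref{Posa2k}.1) should be run at $k=a+\ell$, where the set $A$ supplies exactly $a=k-\ell$ vertices of degree at most $k$; your stated count (``at least $a-\ell$ vertices of degree at most $a$'') does not match the lemma you can prove. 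You acknowledge that this accounting---showing each blockage costs exactly one unit of slack---is ``the hard part'' and you do not supply it, so the mechanism producing the $-\ell$ shift, which is the entire content of the theorem, is asserted rather than proved. (A minor remark: your restriction that the \emph{pivot} edge be free is vacuous, since every forced edge already lies on the hamiltonian $F$-path, so a forced pivot at an endpoint can only be the trivial rotation.)

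More seriously, the borderline case does not follow from your lemma even if the bookkeeping were completed. When $a+\ell\ge\frac{n+\ell-1}{2}$, the only degree bound you hold on the vertices of $A$ is $a+\ell$, which \emph{exceeds} the cutoff $\frac{n+\ell-1}{2}$, so you cannot place them among the vertices ``of degree not exceeding $\frac{n+\ell-1}{2}$'' and condition (\ref{Posa2k}.2) is never engaged; classically this case requires an extra device (e.g.\ a second round of rotations fixing the new endpoint and moving $v_n$). The same objection applies to your unsupported dismissal of the regime $a\le\ell$. The paper's route avoids all of this: it proves only the single crossing lemma sketched before Theorem~\ref{D1} (if $uv\notin E(G)$, $\deg(u)+\deg(v)\ge n+\ell$, and $G$ is not $\ell$-hamiltonian, then neither is $G+uv$), and then deduces the theorem by a closure-plus-counting argument of exactly the type carried out in Lemma~\ref{g-d}: in the $\ell$-saturated supergraph, a non-edge $uv$ maximizing $\deg(u)+\deg(v)$ has $\deg(u)+\deg(v)\le n+\ell-1$, so the $k'-\ell$ non-neighbours of $v$ (with $k':=n+\ell-1-\deg(v)$) all have degree at most $k'$, violating (\ref{Posa2k}.1) if $k'<\frac{n+\ell-1}{2}$, while otherwise $\deg(v)\le\frac{n+\ell-1}{2}$ and maximality yields at least $n-\deg(v)\ge\frac{n-\ell+1}{2}$ vertices of degree at most $\frac{n+\ell-1}{2}$, violating (\ref{Posa2k}.2). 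That argument handles both thresholds uniformly, with no rotation bookkeeping, and is the repair I would recommend for your draft.
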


We need  the following easy claim.
\begin{claim}\label{kcliques}
Let $G$ be an $n$ vertex graph with a set of $s$ vertices with degree at most $t$. Then $N(G, K_r) \leq {n - s \choose r} + s{t \choose r-1}$.
\end{claim}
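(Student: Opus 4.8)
The plan is to classify the copies of $K_r$ in $G$ according to whether or not they meet the set of low-degree vertices, and to bound the two classes separately. Write $S$ for the given set of $s$ vertices of degree at most $t$, and let $W = V(G) \setminus S$, so that $|W| = n - s$.

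First I would count the copies of $K_r$ lying entirely inside $W$. Every such clique is a choice of $r$ vertices from $W$, so there are at most $\binom{n-s}{r}$ of them. Next I would handle the copies of $K_r$ that contain at least one vertex of $S$. For a fixed vertex $v \in S$, any copy of $K_r$ through $v$ consists of $v$ together with $r-1$ of its neighbors that are pairwise adjacent; hence the number of copies of $K_r$ containing $v$ is at most $\binom{\deg(v)}{r-1} \leq \binom{t}{r-1}$, where the inequality uses $\deg(v) \leq t$. Summing this bound over all $v \in S$ counts every clique meeting $S$ at least once, so the number of copies of $K_r$ meeting $S$ is at most $\sum_{v \in S} \binom{t}{r-1} = s\binom{t}{r-1}$.

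Adding the two bounds yields $N(G, K_r) \leq \binom{n-s}{r} + s\binom{t}{r-1}$, as claimed. There is no genuine obstacle in this argument: the only point to watch is that summing the per-vertex counts over $S$ overcounts those cliques that meet $S$ in more than one vertex, but since we only seek an upper bound this overcounting is harmless and can only inflate the right-hand side. The degree hypothesis enters solely through the monotonicity $\binom{\deg(v)}{r-1} \leq \binom{t}{r-1}$ for $v \in S$.
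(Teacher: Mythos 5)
Your proof is correct and follows essentially the same approach as the paper's: split the copies of $K_r$ into those avoiding the low-degree set (at most $\binom{n-s}{r}$) and those meeting it (at most $\binom{t}{r-1}$ per vertex, summed over the set). Your added remark about overcounting being harmless for an upper bound is a fine clarification but not a substantive difference.
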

\begin{proof}
Let $D$ be the set of $s$ vertices with degree at most $t$. Then the number of $K_r$'s disjoint from $D$ is at most   ${n - s \choose r}$. Meanwhile, since each vertex $v$ of $D$ has degree at most $t$, $v$ is contained in at most ${t \choose r-1}$ copies of $K_r$. Summing up over all $v \in D$, we obtain our result.
\end{proof}

The following lemma is a corollary of Theorem~\ref{Posa2k} using Claim~\ref{kcliques}.

\begin{lem}\label{lem8}
 Let $G$ be an $n$-vertex, not $\ell$-hamiltonian graph  with $N(G,K_r) > h_r(n, \left \lfloor \frac{n+ \ell - 1}{2} \right \rfloor, \ell)$.
Then \\
${}$\quad {\rm (\ref{lem8}.1)} \enskip $V(G)$ contains a subset $D$ of $k-\ell$ vertices of degree at most $k$ for some $k$ with $\ell < k \leq  \lfloor \frac{n+ \ell - 1}{2}\rfloor$; \\
${}$\quad {\rm (\ref{lem8}.2)} \enskip $k$ should be less than $\lfloor \frac{n+ \ell - 1}{2}\rfloor$;  \\
${}$\quad {\rm (\ref{lem8}.3)} \enskip $N(G, K_r) \leq h_r(n,k,\ell)$.
   \end{lem}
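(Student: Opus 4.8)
The plan is to use Theorem~\ref{Posa2k} contrapositively: since $G$ is not $\ell$-hamiltonian, at least one of the Pósa--Kronk conditions (\ref{Posa2k}.1), (\ref{Posa2k}.2) must fail. First I would record the one computation linking the degree-sequence language to clique counts. For an integer $k$ write $s_k$ for the number of vertices of degree at most $k$, and observe that whenever $s_k\ge k-\ell$ one may take $D$ to be any $k-\ell$ of the corresponding low-degree vertices and apply Claim~\ref{kcliques} with parameters $s=k-\ell$ and $t=k$. Since $\binom{n-(k-\ell)}{r}+(k-\ell)\binom{k}{r-1}=\binom{n-k+\ell}{r}+(k-\ell)\binom{k}{r-1}=h_r(n,k,\ell)$, this immediately yields $N(G,K_r)\le h_r(n,k,\ell)$. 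Thus (\ref{lem8}.1) reduces to producing an integer $k$ with $\ell<k\le\lfloor (n+\ell-1)/2\rfloor$ and $s_k\ge k-\ell$, and (\ref{lem8}.3) follows automatically for that $k$.

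To find such a $k$, I would argue by contradiction: suppose $s_k<k-\ell$ for every integer $k$ with $\ell<k\le\lfloor (n+\ell-1)/2\rfloor$. The integers in this range are exactly $\ell+1,\dots,\lfloor (n+\ell-1)/2\rfloor$, and a short parity check shows that this assumption verifies both Pósa--Kronk hypotheses at once. The values $\ell<k<(n+\ell-1)/2$ directly give (\ref{Posa2k}.1), while the top value $k=\lfloor (n+\ell-1)/2\rfloor$ gives (\ref{Posa2k}.2): indeed $k-\ell=\lfloor (n-\ell-1)/2\rfloor\le (n-\ell-1)/2$, and, as degrees are integers, $s_k$ counts precisely the vertices of degree at most $(n+\ell-1)/2$. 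Then Theorem~\ref{Posa2k} would force $G$ to be $\ell$-hamiltonian, a contradiction. Hence some $k$ in the desired range satisfies $s_k\ge k-\ell$, establishing (\ref{lem8}.1) and, with the reduction above, (\ref{lem8}.3).

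For (\ref{lem8}.2) I would observe that if the $k$ just produced satisfied $k=\lfloor (n+\ell-1)/2\rfloor$, then (\ref{lem8}.3) would read $N(G,K_r)\le h_r(n,\lfloor (n+\ell-1)/2\rfloor,\ell)$, directly contradicting the hypothesis of the lemma. Therefore every admissible $k$ lies strictly below $\lfloor (n+\ell-1)/2\rfloor$, and fixing any such $k$ yields a single value satisfying all three assertions simultaneously.

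The only real obstacle here is bookkeeping rather than ideas: the Pósa--Kronk conditions are phrased in terms of the real number $(n+\ell-1)/2$, whereas the clique count and the degree thresholds are indexed by integers, so one must track the two parities of $n+\ell$ carefully to be certain that assuming $s_k<k-\ell$ across the integer range $\ell<k\le\lfloor (n+\ell-1)/2\rfloor$ really implies both (\ref{Posa2k}.1) and (\ref{Posa2k}.2). Once the floor arithmetic is pinned down, the clique bound is a one-line application of Claim~\ref{kcliques}.
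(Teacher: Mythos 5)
Your proof is correct and follows essentially the same route as the paper: the contrapositive of Theorem~\ref{Posa2k} produces the low-degree set, and Claim~\ref{kcliques} with $(s,t)=(k-\ell,k)$ converts it into the bound $h_r(n,k,\ell)$, with the lemma's hypothesis ruling out $k=\lfloor(n+\ell-1)/2\rfloor$. If anything, your unified treatment via $s_k$ and the explicit parity bookkeeping is more careful than the paper's terse two-case computation with specific $(s,t)$ values.
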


\begin{proof}
To estimate $N(G,K_r)$ in the cases {\rm (\ref{Posa2k}.2)}  and {\rm (\ref{Posa2k}.1)}  in Theorem~\ref{Posa2k} apply Claim~\ref{kcliques} with the values
  $(s,t)=(\frac{n- \ell +1}{2},\frac{n+ \ell - 1}{2})$ and with $(s,t)=(\frac{n- \ell - 2}{2},\frac{n+ \ell - 2}{2})$, respectively.
In both cases the upper bound for $N(G,K_r)$ is equal to $h(n,\left\lfloor \frac{n+\ell - 1}{2}\right\rfloor ,\ell)$.
Equation~{\rm (\ref{lem8}.3)} is also implied by Claim~\ref{kcliques}.
\end{proof}

For any integer $p\geq 1$
 and real $x$ define ${x\choose p}$ as $x(x-1)\dots (x-p+1)/p!$ for $x\geq p-1$ and $0$ for $x<p-1$.
This function is non-negative and convex  (investigate the second derivative with respect to $x$ when the function is positive).

 For fixed integers $n$, $\ell$, and $r$ with  $0\leq \ell\leq n-1$, and $r\geq 2$, consider the function $h_r(n,x,\ell)$
in the closed interval  $\left[\ell, \left \lfloor \frac{n+\ell-1}{2} \right\rfloor\right]$.
One can show that this function is also convex in $x$ (since both terms are convex), and it is strictly convex where it is positive.
We obtained the following.

\begin{claim}\label{claim10}
Let $J\subseteq \left[\ell, \left \lfloor \frac{n+\ell-1}{2} \right\rfloor\right]$ be a closed interval.
Then  $h_r(n,x,\ell)$ is maximized on $J$ at either of its endpoints.
  \qed
\end{claim}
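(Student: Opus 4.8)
The plan is to deduce the claim directly from the convexity of $h_r(n,x,\ell)$ in $x$ on the interval $\left[\ell, \left\lfloor \frac{n+\ell-1}{2}\right\rfloor\right]$ noted just above the statement, combined with the elementary principle that a convex function on a closed interval attains its maximum at one of the two endpoints. Since $J$ is a subinterval of $\left[\ell, \left\lfloor \frac{n+\ell-1}{2}\right\rfloor\right]$, the restriction of $h_r(n,\cdot,\ell)$ to $J$ is again convex, so it suffices to apply this principle on $J$.

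First I would record precisely why $h_r(n,x,\ell) = {n-x+\ell \choose r} + (x-\ell){x \choose r-1}$ is convex on the interval, since this is where the actual content lies. The first summand is the convex function $y \mapsto {y \choose r}$ precomposed with the affine map $x \mapsto n+\ell-x$, and convexity is preserved under affine reparametrization, so ${n-x+\ell \choose r}$ is convex in $x$. The second summand $f_2(x) = (x-\ell){x \choose r-1}$ is a product of a linear factor and the convex function $g(x) = {x \choose r-1}$, which a priori need not be convex; here the crucial point is that $x-\ell \ge 0$ throughout $J$. Differentiating gives $f_2''(x) = 2g'(x) + (x-\ell)g''(x)$, and on the relevant range $g$ is nonnegative, convex, and nondecreasing (it is constant zero up to $x=r-2$ and increasing thereafter), so $g'(x) \ge 0$; together with $g''(x) \ge 0$ and $x-\ell \ge 0$ this yields $f_2'' \ge 0$, hence convexity of $f_2$ and of $h_r(n,\cdot,\ell)$.

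With convexity in hand I would finish by the standard argument. Let $a \le b$ be the endpoints of $J$ and let $x \in J$ be arbitrary, so that $x = \lambda a + (1-\lambda)b$ for some $\lambda \in [0,1]$. Convexity (Jensen's inequality) gives $h_r(n,x,\ell) \le \lambda\, h_r(n,a,\ell) + (1-\lambda)\, h_r(n,b,\ell) \le \max\{h_r(n,a,\ell),\, h_r(n,b,\ell)\}$. Therefore the maximum of $h_r(n,\cdot,\ell)$ over $J$ is attained at $a$ or at $b$, which is exactly the assertion.

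The main obstacle is the convexity of the product term $(x-\ell){x \choose r-1}$: the product of a linear and a convex function is not convex in general, and indeed it would fail for $x<\ell$. The restriction to an interval with left endpoint $\ell$ is precisely what forces $x-\ell \ge 0$ and makes $2g'(x) + (x-\ell)g''(x)$ nonnegative; once this sign analysis is settled, everything else is routine and the endpoint-maximization follows immediately.
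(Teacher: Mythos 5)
Your proposal is correct and takes essentially the same route as the paper: the authors also deduce the claim from convexity of $h_r(n,x,\ell)$ on $\left[\ell, \left\lfloor \frac{n+\ell-1}{2} \right\rfloor\right]$ (asserting that ``both terms are convex'') together with the standard fact that a convex function on a closed interval attains its maximum at an endpoint. Your verification that the product term $(x-\ell){x \choose r-1}$ is convex --- which genuinely requires $x-\ell \geq 0$ together with the nonnegativity, monotonicity, and convexity of ${x \choose r-1}$ on the relevant range --- supplies the detail the paper leaves to the reader.
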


{\bf Proof of Theorem~\ref{PP}.} Suppose that $G$ is an $n$-vertex, not $\ell$-hamiltonian graph with minimum degree $\delta(G) \geq d$,   for some $1\leq \ell \leq d \leq \left \lfloor \frac{n+\ell - 1}{2} \right \rfloor$.

If $N(G, K_r) \leq h_r(n,\lfloor \frac{n+\ell - 1}{2} \rfloor, \ell)$, then we are done.
Otherwise, {\rm (\ref{lem8}.3)} in Lemma~\ref{lem8}
implies that $ N(G, K_r) \leq h_r(n,k,\ell)$ for some $\ell < k < \lfloor \frac{n+\ell - 1}{2} \rfloor$.
We have that $k\geq \delta(G)\geq d$.
So Claim~\ref{claim10} gives
 \[N(G, K_r) \leq \max_{k\in \left[ d,\lfloor \frac{n+\ell - 1}{2} \rfloor \right]} h_r(n,k,\ell)=\max \left\{ h_r(n,d,\ell), h_r(n, \left\lfloor \frac{n+\ell - 1}{2}\right\rfloor,\ell)\right\}.   \qed
\]

\section{Stability: Proof of  Theorem~\ref{ma2}}

Call an $n$-vertex graph $G$  {\it $\ell$-saturated} if $G$ is not $\ell$-hamiltonian, but each $n$-vertex graph
obtained from $G$ by adding an edge is $\ell$-hamiltonian.

\begin{thm}[Bondy and Chv\'atal (Theorem~9.11 in~\cite{BC76})]\label{D1}
Let $G$ be an  $n$-vertex $\ell$-saturated graph $G$. Then for each non-edge $uv \notin E(G)$, $\deg(u) + \deg(v) \leq n-1+\ell$.
\end{thm}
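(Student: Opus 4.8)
This statement is a classical result of Bondy and Chv\'atal, so the plan is to reconstruct its standard proof via the closure operation. The key idea is that $\ell$-hamiltonicity admits a ``stability under edge addition'' phenomenon of exactly the type used in Bondy--Chv\'atal closure arguments: if $uv$ is a non-edge with $\deg(u)+\deg(v) \geq n+\ell$, then $G$ is $\ell$-hamiltonian if and only if $G+uv$ is. The contrapositive of this equivalence is precisely what we need.

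First I would establish the core lemma: suppose $uv \notin E(G)$ satisfies $\deg_G(u)+\deg_G(v) \geq n+\ell$, and suppose $G+uv$ is $\ell$-hamiltonian; I claim $G$ itself is $\ell$-hamiltonian. To see this, take any linear forest $F \subseteq G$ with $\ell$ edges. Since $F \subseteq G \subseteq G+uv$ and $G+uv$ is $\ell$-hamiltonian, $F$ extends to a hamiltonian cycle $C$ of $G+uv$. If $C$ avoids the edge $uv$, then $C$ is already a hamiltonian cycle of $G$ containing $F$ and we are done. Otherwise $uv \in E(C)$, and deleting $uv$ from $C$ yields a hamiltonian path $P$ of $G$ from $u$ to $v$ that still contains all $\ell$ edges of $F$ (note $uv \notin F$ since $F \subseteq G$). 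The plan here is the classical rotation/crossing argument: label the vertices along $P$ as $u=x_1, x_2, \dots, x_n=v$, and look for an index $i$ with $x_i \in N(u)$ and $x_{i-1} \in N(v)$, where both of these are edges of $G$. Such a pair yields the hamiltonian cycle $x_1 x_2 \cdots x_{i-1} x_n x_{n-1} \cdots x_i x_1$ in $G$, and I must verify this cycle still contains $F$.

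The main obstacle, and the heart of the argument, is twofold: producing the crossing index $i$ from the degree condition, and ensuring the rearranged cycle preserves the forest $F$. For existence of $i$, I would use a counting argument: consider the set $S = \{i : x_{i+1} \in N(u)\}$ and $T = \{i : x_i \in N(v)\}$ as subsets of $\{1,\dots,n-1\}$; since $\deg(u)+\deg(v) \geq n+\ell$ and $u,v$ are the endpoints, $|S|+|T|$ is large enough that $S \cap T$ is nonempty even after we forbid the (at most $\ell$) indices whose use would break an edge of $F$. Concretely, the edges of $F$ lying on $P$ constrain which switches are legal; because $F$ has only $\ell$ edges, it forbids at most $\ell$ candidate crossings, and the surplus $+\ell$ in the degree hypothesis is exactly what compensates for this. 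This is the delicate bookkeeping step and is where the parameter $\ell$ enters essentially.

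Finally I would assemble the theorem. Let $G$ be $\ell$-saturated and let $uv \notin E(G)$; by definition $G+uv$ is $\ell$-hamiltonian while $G$ is not. The core lemma, in contrapositive form, then forces $\deg_G(u)+\deg_G(v) < n+\ell$, i.e. $\deg(u)+\deg(v) \leq n-1+\ell$, which is the claimed bound. I expect the degree-counting step with the $F$-forbidden crossings to be the one requiring genuine care; the reduction from the saturation hypothesis to the lemma is immediate, and the paper may simply cite Theorem~9.11 of \cite{BC76} rather than reprove it, in which case the proof is a one-line invocation.
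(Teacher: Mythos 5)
Your proposal is correct and takes essentially the same route as the paper: the paper proves the contrapositive form of your core lemma (if $G$ is not $\ell$-hamiltonian, $uv \notin E(G)$, and $\deg(u)+\deg(v) \geq n+\ell$, then $G+uv$ is not $\ell$-hamiltonian) by exactly the P\'osa rotation you describe along the $u$--$v$ hamiltonian path containing $F$, with the surplus $+\ell$ in the degree hypothesis absorbing the at most $\ell$ crossings forbidden by edges of $F$. Your concluding deduction from the saturation hypothesis is also identical to the paper's, so nothing further is needed.
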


They observed that the proof  by P\'osa~\cite{Po} yields the following fact:
If $G$ is an  $n$-vertex, not $\ell$-hamiltonian graph, $uv \notin E(G)$, and $\deg(u) + \deg(v) \geq n+\ell$, then
  $G+uv$ is not $\ell$-hamiltonian either.
Since this result implies both Theorems~\ref{Posa2} and~\ref{Posa2k},
  to make this paper self-contained we include here a sketch of their proof.

Suppose on the contrary, that
$G$ has no hamiltonian cycle containing some linear forest on $\ell$ edges $F$ but $G+uv$ has a hamiltonian cycle through $F$.
Then we can order the vertices so that $G$ has a hamiltonian path $w_1w_2\ldots w_n$ where $w_1=u$ and $w_n=v$ containing $F$.
Let $N_G(u)=\{w_{i_1},\ldots,w_{i_k}\}$ where $k= \deg_G(u)$.
If there exists a $1\leq j \leq k$ such that $w_{i_j -1} \in N(v)$ and $w_{i_j}w_{i_j-1} \notin  E(F)$, then
\[w_1\ldots w_{i_j-1} \cup w_{i_j-1} w_n \cup w_n \ldots w_{i_j} \cup w_{i_j}w_1\] is a hamiltonian cycle in $G$ containing $F$.
So either $w_{i_j-1}w_{i_j} \in F$, or $w_{i_j-1} \notin N(v)$ .
Since $F$ contains $\ell$ edges, we have $k-\ell$ choices of $w_{i_j-1}\in V(G)\setminus \{ v\}$ satisfying $vw_{i_j-1} \notin E(G)$.
This gives $\deg_G(v)\leq (n-1)-(k-\ell)$ yielding the contradiction $\deg(u) + \deg(v) \leq n+\ell-1$.

To complete the proof of Theorem~\ref{D1} suppose that $G$ is $\ell$-saturated (note that $G$ is not $\ell$-hamiltonian) and suppose that $uv\notin E(G)$. If  $\deg(u) + \deg(v) \geq n+\ell$, then $G+uv$ is not $\ell$-hamiltonian, a contradiction.    \qed

We show  a useful feature of the structure of saturated  graphs with many edges.

\begin{lem}\label{g-d} Let $G$ be an $\ell$-saturated $n$-vertex  graph with $N(G,K_r) > h_r(n, \left \lfloor \frac{n+ \ell - 1}{2} \right \rfloor, \ell)$.
Then for some $\ell < k < \left\lfloor \frac{n+ \ell - 1}{2} \right\rfloor$, $V(G)$ contains a subset $D$ of $k-\ell$ vertices of degree at most $k$  such that $G - D$ is a complete graph.
\end{lem}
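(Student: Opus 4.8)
The plan is to combine Lemma~\ref{lem8} (applied to the $\ell$-saturated graph $G$) with the structural information that $\ell$-saturation provides through Theorem~\ref{D1}. First I would invoke Lemma~\ref{lem8}: since $G$ is not $\ell$-hamiltonian and $N(G,K_r) > h_r(n,\lfloor (n+\ell-1)/2\rfloor,\ell)$, part~(\ref{lem8}.1) yields an integer $k$ with $\ell < k \le \lfloor (n+\ell-1)/2\rfloor$ and a set $D$ of exactly $k-\ell$ vertices each of degree at most $k$; part~(\ref{lem8}.2) upgrades this to the strict inequality $k < \lfloor (n+\ell-1)/2\rfloor$, which is exactly the range asserted in the lemma. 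So the set $D$ and the index $k$ are handed to us directly, and the only remaining task is to prove that $G-D$ is a complete graph.

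The key step is to show that any two vertices $x,y \in V(G)\setminus D$ are adjacent. Suppose not, so $xy \notin E(G)$. Because $G$ is $\ell$-saturated, Theorem~\ref{D1} gives $\deg(x)+\deg(y) \le n-1+\ell$. On the other hand, $x,y \notin D$, and $D$ was (implicitly) chosen as the set of \emph{all} low-degree vertices picked out by the degree-sequence conditions of Theorem~\ref{Posa2k}; the point is that the violation of condition~(\ref{Posa2k}.1) occurs precisely at the threshold $k$, so the vertices outside $D$ can be taken to have degree strictly larger than $k$. The hard part will be making this last clause rigorous: I must argue that the $k-\ell$ vertices of small degree are exactly the relevant low-degree vertices and that everyone else has degree at least $k+1$. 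Concretely, I would track which inequality of Theorem~\ref{Posa2k} is violated when $G$ fails to be $\ell$-hamiltonian, identify $D$ with the set of vertices of degree at most $k$ witnessing that violation, and conclude that $V(G)\setminus D$ consists entirely of vertices of degree $\ge k+1$.

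With $\deg(x),\deg(y) \ge k+1$ for $x,y \notin D$, a non-edge $xy$ would force $\deg(x)+\deg(y) \ge 2k+2$, and I would like this to contradict the saturation bound $\deg(x)+\deg(y)\le n-1+\ell$. Since $k < \lfloor (n+\ell-1)/2\rfloor$ gives $2k+2 \le n+\ell-1$ — too weak by itself — the obstacle I anticipate is that the clean degree count alone is off by a constant, so I expect to need a slightly sharper bound on the degrees of vertices outside $D$ (for instance, that they exceed $\lfloor(n+\ell-1)/2\rfloor$, or that $D$ can be enlarged to absorb any borderline vertex without changing the conclusion). The natural way to close this gap is to choose $D$ maximally: take $D$ to be all vertices of degree at most $\lfloor(n+\ell-1)/2\rfloor$ not forced to lie outside, and use condition~(\ref{Posa2k}.2) to cap $|D|$, so that every remaining vertex has degree at least $\lfloor(n+\ell+1)/2\rfloor$; then two such vertices cannot form a non-edge without violating Theorem~\ref{D1}. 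Thus $V(G)\setminus D$ is a clique, i.e.\ $G-D$ is complete, which completes the proof.
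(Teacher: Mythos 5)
Your final argument does prove something true: if $D^*$ denotes the set of \emph{all} vertices of degree at most $M:=\left\lfloor\frac{n+\ell-1}{2}\right\rfloor$, then any two vertices outside $D^*$ have degree sum at least $2M+2\geq n+\ell$, so by Theorem~\ref{D1} they are adjacent in the $\ell$-saturated graph $G$, i.e., $G-D^*$ is complete. But this is not the statement of Lemma~\ref{g-d}. The lemma requires a \emph{single} integer $k$ with $\ell<k<M$ such that the deleted set consists of exactly $k-\ell$ vertices, \emph{each of degree at most $k$}; this size/degree pairing is what is used downstream (Lemma~\ref{hnd} relies on $|D|=d-\ell$ and $\delta(G)\geq |D|+\ell=d$). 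Your set $D^*$ need not satisfy the pairing: the clique-count hypothesis (via Claim~\ref{kcliques}, which is the real source of your ``cap'', not condition (\ref{Posa2k}.2)) bounds $|D^*|\leq M-\ell-1$, so one would have to take $k=|D^*|+\ell<M$, but the vertices of $D^*$ may a priori have degree anywhere up to $M$, which can exceed that $k$. Conversely, if you keep the set $D$ produced by Lemma~\ref{lem8} (with $k$ chosen maximal, so that $D$ is all vertices of degree at most $k$ and $|D|=k-\ell$), then your adjacency argument only applies to non-edges both of whose endpoints have degree exceeding $M$; it says nothing about a possible non-edge incident to a vertex of ``intermediate'' degree, i.e., degree in $\{k+1,\dots,M\}$. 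Ruling out such vertices is exactly the crux of the lemma, and your proposal --- you candidly flag this as the hard part --- has no mechanism for it.

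The paper closes precisely this gap. Among all non-adjacent pairs in $V(G)-D$ it chooses $x,y$ with $\deg(x)$ maximum and sets $D':=V(G)-N(x)-\{x\}$. Theorem~\ref{D1}, applied to the non-edges at $x$, gives $\deg(z)\leq n+\ell-1-\deg(x)=:k'$ for every $z\in D'$, and $|D'|=n-1-\deg(x)=k'-\ell$, so $D'$ is itself a set of $k'-\ell$ vertices of degree at most $k'$. Since $y\in D'\setminus D$, one gets $k'\geq\deg(y)>k$, and the maximality of $k$ then forces $k'>M$, i.e., $\deg(x)\leq M$. Now every $z\in D'$ has degree at most $M$ (either $z\in D$, or $z$ forms a non-edge with $x$ inside $V(G)-D$, whence $\deg(z)\leq\deg(x)$ by the extremal choice of $x$), and $|D'|>M-\ell$; so $G$ contains $M-\ell$ vertices of degree at most $M$, and Claim~\ref{kcliques} yields $N(G,K_r)\leq h_r(n,M,\ell)$, contradicting the hypothesis (this is the content of (\ref{lem8}.2)). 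That chain --- the maximum-degree choice of $x$, the non-neighborhood set $D'$, and the contradiction with the clique count --- is the missing idea; the two-large-degrees computation you propose cannot replace it.
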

{\bf Proof.}
Apply Lemma~\ref{lem8}  {\rm (\ref{lem8}.1)} to $G$. It says that there is a subset of $k-\ell$ vertices of degree at most $k$  such that $\ell < k \leq \left\lfloor \frac{n+ \ell - 1}{2} \right\rfloor$.
Choose the maximum such $k$, and let $D$ be the set of the vertices with degree at most $k$.
Then Lemma~\ref{lem8}  {\rm (\ref{lem8}.2)} implies that $k <\left\lfloor \frac{n+ \ell - 1}{2} \right\rfloor $.
Then the maximality of $k$ gives that  $|D|=k-\ell$.

Suppose there exist $x, y \in V(G) - D$ such that $xy \notin E(G)$. Among all such pairs, choose $x$ and $y$ with the maximum $\deg(x)$.
Let $D':=V(G)-N(x)-\{x\}$.
Here $|D'|=n-\deg(x)-1>0$.
By Theorem~\ref{D1},
\begin{equation*}
 \mbox{ \em $\deg(z) \leq n  + \ell - 1 - \deg(x) = |D'| + \ell =: k' \;\;$ for all $\;\; z\in D'$.
}
\end{equation*}
So $D'$ is a set of $k'-\ell$ vertices of degree at most $k'$.
Since $y\in D'\setminus D$, $\, k' \geq \deg(y) > k$.
Thus by the maximality of $k$, we get
  $k' = n+\ell - 1 -\deg(x)> \left\lfloor \frac{n+\ell - 1}{2}\right\rfloor$.
Equivalently, $\deg(x) < \lceil \frac{n + \ell - 1}{2}\rceil$, i.e.,  $\deg(x)\leq \left\lfloor \frac{n+\ell - 1}{2}\right\rfloor$.
We also get $|D'|=n-1-\deg(x)>  n-1 -\lceil \frac{n + \ell - 1}{2}\rceil =\lfloor\frac{n+\ell - 1}{2}\rfloor -\ell$.

For all $z \in D'$, either $z \in D$ where $\deg(z)\leq k \leq \left\lfloor\frac{n+\ell - 1}{2}\right\rfloor$, or $z \in V(G) - D$,
and so $\deg(z) \leq \deg(x)\leq \left\lfloor \frac{n+\ell - 1}{2}\right\rfloor$.
It follows that
 $D'$ has a subset of $ \left\lfloor \frac{n+\ell-1}{2}\right\rfloor - \ell$ vertices of degree at most  $\left\lfloor \frac{n+\ell -1}{2}\right\rfloor$.
This contradicts Lemma~\ref{lem8}  {\rm (\ref{lem8}.2)}.

Thus, $G-D$ is a complete graph.\qed

\begin{lem}\label{hnd} Under the conditions of Lemma \ref{g-d}, if $k = \delta(G)$, then $G = H_{n,\delta(G),\ell}$ or $G = H'_{n,\delta(G),\ell}$.
     \end{lem}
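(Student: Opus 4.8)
The plan is to read the structure of $G$ off Lemma~\ref{g-d}, use saturation to pin down the neighbourhoods of the low-degree vertices, and then show that the graph they induce is either empty or complete; the two cases will give exactly $H_{n,d,\ell}$ and $H'_{n,d,\ell}$. Write $d=\delta(G)=k$. Since $D$ consists of $k-\ell$ vertices of degree at most $k=d$ while $\delta(G)=d$, every vertex of $D$ has degree exactly $d$, and $|D|=d-\ell$. By Lemma~\ref{g-d}, $A:=V(G)-D$ induces a complete graph on $n-d+\ell$ vertices.

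First I would determine the edges between $A$ and $D$. Fix $x\in A$ and suppose $x$ misses some $v\in D$. Applying Theorem~\ref{D1} to the non-edge $xv$ gives $\deg(x)\le n-1+\ell-\deg(v)=n-1+\ell-d=|A|-1$; but $x$ is adjacent to all of $A\setminus\{x\}$, so $\deg(x)=|A|-1$ and $x$ has no neighbour in $D$ at all. Hence each vertex of $A$ is either complete or anti-complete to $D$. Let $B$ be the set of vertices of $A$ complete to $D$, put $C:=A\setminus B$ and $b:=|B|$. Then every vertex of $D$ has the same neighbourhood $B$ inside $A$, the vertices of $B$ are universal (degree $n-1$), $C$ is a clique with no edge to $D$, and $G[D]$ is $(d-b)$-regular. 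Regularity forces $\ell+1\le b\le d$, and $|C|=n-d+\ell-b\ge 1$. If $G[D]$ is empty then $b=d$ and $G=H_{n,d,\ell}$; if $G[D]$ is complete then $d-b=(d-\ell)-1$, so $b=\ell+1$ and $B\cup D$ is a clique $K_{d+1}$ sharing the $(\ell+1)$-set $B$ with $A$, i.e.\ $G=H'_{n,d,\ell}$. So everything reduces to ruling out $G[D]$ being neither empty nor complete.

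For this I would analyse hamiltonian cycles through the cut $B$. Let $\lambda$ be the maximum number of edges of a linear forest contained in $G[D]$. In any hamiltonian cycle the edges inside $D$ form a linear forest, so at most $\lambda$ of them occur; counting cycle-edges at $D$ (which meets only $B\cup D$), counting them at $B$, and using that the clique $C\neq\emptyset$ needs at least two cycle-edges into $B$, one finds that a linear forest consisting of $\ell$ edges inside $B$ can be completed to a hamiltonian cycle only if $\lambda\ge d-b+1$. Conversely, if $\lambda\ge d-b+1$ then $G$ is $\ell$-hamiltonian: $D$ is then covered by at most $b-\ell-1$ vertex-disjoint paths and $C$ by one path, and the $b$ universal vertices of $B$ can be used to stitch these at most $b-\ell$ paths, together with the $\ell$ edges of an arbitrary linear forest $F$, into a single hamiltonian cycle containing $F$. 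Granting both directions, assume $G[D]$ is neither empty nor complete; then $G[D]$ is a regular graph that is neither edgeless nor complete, and a short extremal fact — a non-complete $r$-regular graph on $m$ vertices contains a linear forest with at least $r+1$ edges (via a large matching when $m$ is large compared to $r$, and via a Dirac-type long path when $r$ is large compared to $m$) — gives $\lambda\ge (d-b)+1$. By the sufficiency just stated $G$ would be $\ell$-hamiltonian, contradicting the hypothesis; hence $G[D]$ is empty or complete, finishing the proof.

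The hard part will be the sufficiency direction of the third paragraph, i.e.\ verifying that $\lambda\ge d-b+1$ really forces $\ell$-hamiltonicity for \emph{every} $\ell$-edge linear forest $F$, not just for the special forests living inside $B$. This is a constructive stitching argument: one must choose path systems covering $C$ and $D$ that already contain the edges of $F$ lying in those parts, and then spend the $b$ universal vertices of $B$ both as junctions between the at most $b-\ell$ paths and as carriers of the edges of $F$ incident to $B$, while keeping $F$ intact and closing up a single spanning cycle. The bookkeeping that the universal vertices suffice for both tasks (which is where the slack $b\ge\ell+1$ enters) is the delicate point; by contrast the regular-graph linear-forest estimate and the cut-counting necessity condition are routine.
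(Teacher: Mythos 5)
Your structural analysis (the first two paragraphs) is correct and is essentially the paper's own first half: your set $B$ is the paper's $W$, and your dichotomy ``$G[D]$ empty $\Rightarrow G=H_{n,d,\ell}$, $G[D]$ complete $\Rightarrow G=H'_{n,d,\ell}$'' is exactly the paper's cases $|W|=d$ and $|W|=\ell+1$. The genuine gap is in the third paragraph, which is where all the difficulty of the lemma lives and which you acknowledge you have not proved. Your reduction rests on the sufficiency claim ``$\lambda\geq d-b+1$ implies $G$ is $\ell$-hamiltonian,'' and the sketch offered for it has a concrete flaw: the bound ``$D$ is covered by at most $b-\ell-1$ vertex-disjoint paths'' comes from a \emph{maximum} linear forest of $G[D]$, whereas the path system covering $D$ in the desired hamiltonian cycle must \emph{contain} the edges of $F\cap G[D]$, and a maximum linear forest need not contain them. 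What is actually required is a forced-extension lemma (every linear forest with at most $\ell$ edges in the regular graph $G[D]$ extends to a linear forest with enough edges, where ``enough'' depends on how many edges of $F$ lie in $B$, between $B$ and $D$, etc.), plus the bookkeeping that the vertices of $B$ can simultaneously serve as junctions and carry the $F$-edges incident to $B$. Neither is supplied; the unforced extremal fact you cite only handles forests $F$ that avoid $D$, and the necessity direction is never used. Note also that the relevant slack is $b\geq\ell+2$, not $b\geq\ell+1$: at $b=\ell+1$ the conclusion is false (that is $H'_{n,d,\ell}$), so any correct bookkeeping must invoke $b\geq\ell+2$ somewhere, which your sketch never does explicitly.

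The paper closes exactly this gap with one move your outline misses: in the intermediate case $\ell+2\leq b\leq d-1$, apply P\'osa's theorem (Theorem~\ref{Posa2}) to $G'=G[B\cup D]$. Every vertex of $G'$ has degree at least $d$ in $G'$ (vertices of $D$ keep all their neighbors, vertices of $B$ are universal), while $|V(G')|=(d-\ell)+b\leq 2d-\ell-1$, so every degree sum is at least $2d\geq |V(G')|+\ell+1$ and $G'$ is $(\ell+1)$-hamiltonian. One then chooses an edge $ab$ inside $B$ so that $(F\cap G')+ab$ is still a linear forest, obtains a hamiltonian cycle of $G'$ through it, deletes $ab$ to get an $(a,b)$-hamiltonian path of $G'$ containing $F\cap G'$, and splices in an $(a,b)$-hamiltonian path of the complete graph $G-(V(G')\setminus\{a,b\})$ through the remaining edges of $F$. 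P\'osa's theorem performs uniformly, for every placement of $F$, precisely the forced path-cover-plus-stitching argument you were attempting by hand. I recommend replacing your third paragraph with this application of Theorem~\ref{Posa2}; the alternative --- proving your forced-extension lemma and the $B$-bookkeeping in full --- is substantially harder than the ``routine'' facts you cite, and is not a shortcut.
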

{\bf Proof.}
Set $d:=\delta(G)$, and let $D$ be a set of $d-\ell$ vertices with degree at most $d$.
Let $u\in D$. Since $\delta(G)\geq |D|+\ell=d$, $u$  has a neighbor $w$ outside of $D$.
Consider any  $v\in D - \{u\}$.
By Lemma~\ref{g-d}, $w$ is adjacent to all of $V(G) - D - \{w\}$. It also is adjacent  to $u$, therefore its degree is at least $(n-d+\ell - 1) +1 = n-d + \ell$.
We obtain
    \[\deg(w) + \deg(v) \geq (n-d+\ell) + d = n+\ell.\]
Then by~\eqref{D1}, $w$ is adjacent to $v$, and hence $w$ is adjacent to all vertices of $D$.

Let $W$ be the set of vertices in $V(G)-D$ having a neighbor in $D$.
We have obtained that $|W|\geq \ell + 1$ and
\begin{equation}\label{D3}
\mbox{\em
 $N(u)\cap (V(G)-D)=W$ for all $u\in D$.}
\end{equation}
Let $G' = G[D \cup W]$.
Since $|W|\leq \delta(G)$, $|V(G')|\leq 2d-\ell$.
If $|V(G')| = 2d-\ell$, then by~\eqref{D3}, each vertex $u \in D$ has the same $d$ neighbors in $V(G) -D$.
Because $\deg(u) = d$, $D$ is an independent set. Thus $G=H_{n,d,\ell}$. Otherwise,  $(\ell + 1 + d - \ell) \leq |V(G')| \leq 2d - \ell-1$.

If $|V(G')| = d + 1$, then $|W| = \ell + 1$. Because $\delta(G) \geq d$, each vertex in $D$ has at least $d-1$ neighbors in $D$. But this implies that $D$ is a clique, and $G = H'_{n, d,\ell}$.

So we may assume $d+2 \leq |V(G')|\leq 2d-\ell - 1$. That is, $|W| \geq \ell + 2$.
We will show that in this case $G$ is $\ell$-hamiltonian, a contradiction.

Let $F$ be a linear forest in $G$ with $\ell$ edges, set $F_1 := F \cap G'$, $F_2 := F - F_1$.
Let $ab$ be any edge within $G[W]$ such that $ab \notin E(F)$ and $F \cup ab$ is a linear forest in $G$. Such an edge must exist because $G[W]$ is a clique and either $F_1$ is a path that occupies at most $\ell + 1$ vertices in $W$, or $F_1$ is a disjoint union of paths and we can choose $ab$ to join endpoints of two different components of $F_1$.

For any $x,x' \in V(G')$,
    \[\deg_{G'}(x) + \deg_{G'}(x') \geq d + d \geq |V(G')| +\ell + 1.\]
Therefore by Theorem~\ref{Posa2}, $G'$ has a hamiltonian cycle $C$ that passes through $F_1 \cup ab$. In particular, we obtain an $(a,b)$-hamiltonian path $P_1$ in $G'$ which passes through $F_1$.
Since $G'':=G-(V(G')-\{a,b\})$ is a complete graph, it contains an $(a,b)$-hamiltonian path $P_2$ that passes through $F_2$.
Then $P_1 \cup P_2$ is a hamiltonian cycle of $G$ containing $F$, a contradiction. \qed

{\bf Proof of Theorem~\ref{ma2}}.
Let  $G'$ be obtained by adding edges to $G$ until it is $\ell$-saturated.
If $N(G', K_r) \leq h_r(n,\lfloor \frac{n+\ell - 1}{2} \rfloor, \ell)$, then we are done.
Otherwise, by  {\rm (\ref{lem8}.2)} in Lemma~\ref{lem8}
   $G'$ contains a set of $k-\ell$ vertices with degree at most $k$ where $\ell < k < \lfloor (n+\ell-1)/2 \rfloor$.
If $k = d$, then $G' \in \{H_{n,d,\ell},H'_{n,d,\ell}\}$ by Lemma~\ref{hnd}, and thus $G$ is a subgraph of one of these two graphs.
If $k \geq d+1$, then $N(G, K_r) \leq N(G', K_r) \leq h_r(n,k, \ell)$ for some   $d+1\leq  k < \lfloor \frac{n+\ell - 1}{2} \rfloor$ by
 {\rm (\ref{lem8}.3)} in Lemma~\ref{lem8}.

So the convexity by Claim~\ref{claim10} gives that in both cases
 \[N(G, K_r) \leq \max_{k\in \left[ d+1,\lfloor \frac{n+\ell - 1}{2} \rfloor \right]} h_r(n,k,\ell)=\max \left\{ h_r(n,d+1,\ell), h_r(n, \left\lfloor \frac{n+\ell - 1}{2}\right\rfloor,\ell)\right\},
\]
a contradiction. \qed

\bigskip
{\bf Conluding remarks}\smallskip

Since in the proof of Theorem~\ref{PP} for the upper bound for $N(G,K_r)$ we only use the degree sequence of $G$, it seems to be likely that one can obtain similar results for graph classes whose degree sequences are well understood.

Let $P$ be a property defined on all graphs of order $n$ and let $k$ be a nonnegative
integer. Bondy and Chv\'atal~\cite{BC76} call $P$ to be $k$-stable if whenever $G+uv$ has property $P$ and $\deg_G(u)+
\deg_G(v)\geq k$, then $G$ itself has property $P$.
The $k$-closure ${\rm Cl}_k(G)$ of a graph $G$ is the (unique)
smallest graph $H$ of order $n$ such that $E(G)\subseteq E(H)$ and $\deg_H(u)+\deg_H(v) < k$ for all $uv \notin E(H)$.
The $k$-closure can be obtained from $G$ by a recursive procedure of joining nonadjacent vertices with degree-sum at
least $k$. Thus, if $P$ is $k$-stable and ${\rm Cl}_k(G)$ has property $P$, then $G$ itself has property $P$.
They prove $k$-stability (with appropriate values of $k$) for a series of graph properties, e.g.,
  $G$ contains $C_s$ ($k=2n-s$), $G$ contains a path $P_s$ ($k=n-1$), $G$ contains a matching $sK_2$ ($k=2s-1$),
  $G$ contains a spanning $s$-regular subgraph ($k=n+2s-4)$, $G$ is $s$-connected ($k=n+s-2$),
 $G$ is $s$-wise hamiltonian, i.e., every $n-s$ vertices span a $C_{n-s}$ ($k=n+s-2$).

These graph classes are good candidates to find the maximum number of $r$-cliques.
But the proof of stability (like in Theorem~\ref{ma2}) might require more insight.

\end{document}